\def\r{\right}
\newtheorem{theorem}{Theorem}
\newtheorem{lemma}[theorem]{Lemma}
\newtheorem{conjecture}[theorem]{Conjecture}
\numberwithin{equation}{section}
\numberwithin{theorem}{section}
\newtheorem*{Remark}{Remark}
\newtheorem*{definition}{Definition}
\newtheorem*{question}{Question}
\def\qed{\ifhmode\textqed\fi
   \ifmmode\ifinner\quad\qedsymbol\else\dispqed\fi\fi}
\def\textqed{\unskip\nobreak\penalty50
    \hskip2em\hbox{}\nobreak\hfil\qedsymbol
    \parfillskip=0pt \finalhyphendemerits=0}
\def\dispqed{\rlap{\qquad\qedsymbol}}
\begin{document}
\title{O\MakeLowercase{n the set of} K\MakeLowercase{ronecker numbers}}
\author{Sayan Goswami}
\address{The Institute of Mathematical Sciences, A CI of Homi Bhabha National Institute, CIT Campus, Taramani, Chennai 600113, India.}
\email{sayan92m@gmail.com}

\author{Wen Huang}
\address {School of Mathematical Sciences, University of Science and Technology of China, Hefei 230026, P. R.
China.}
\email {wenh@mail.ustc.edu.cn}

\author{XiaoSheng Wu}
\address {School of Mathematics, Hefei University of Technology, Hefei 230009,
P. R. China}
\email {xswu@amss.ac.cn}
\date{}
\subjclass[2010]{11N05, 37A44 }
\keywords{difference of primes; Kronecker number; $\Delta_r^*$-set; IP-set.}

\begin{abstract}
An positive even number is said to be a Kronecker number if it can be written in infinitely many ways as the difference between two primes, and it is believed that all even numbers are Kronecker numbers. 
We will study the division and multiplication of Kronecker numbers to study the largeness of the set of Kronecker numbers. A numerical lower bound for the density Kronecker numbers among even numbers is given, and it is proved that there exists a computable constant $k$ and a set $D$ consisting of at most 720 computable Maillet numbers such that, for any integer $n$, $kn$ can be expressed as a product of a Kronecker number with a Maillet number in $D$. Meanwhile, it is proved that every positive rational number can be written as a ratio of two Kronecker numbers.

\end{abstract}
\maketitle
\section{Introduction}

The distribution of the difference of primes is a widely concerned theme in number theory, and the following conjecture is well-known.
\begin{conjecture}[Kronecker \cite{Kro01}]
Every even number can be written in infinitely many ways as the difference of two primes.
\end{conjecture}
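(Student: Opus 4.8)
The plan is to fix a positive even integer $h$ and prove that $\#\{p\le x : p \text{ and } p+h \text{ are both prime}\}$ tends to infinity with $x$; letting $h$ range over all positive even integers then yields Kronecker's conjecture in full. The natural engine is the Hardy--Littlewood circle method. Writing $S(\alpha)=\sum_{n\le x}\Lambda(n)e(n\alpha)$, the weighted count $\sum_{n\le x}\Lambda(n)\Lambda(n+h)$ equals $\int_0^1|S(\alpha)|^2 e(h\alpha)\,d\alpha$, and one dissects $[0,1)$ into major arcs $\mathfrak{M}$ around rationals $a/q$ with $q\le Q$ and the complementary minor arcs $\mathfrak{m}$. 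On $\mathfrak{M}$, classical equidistribution of primes in arithmetic progressions (Siegel--Walfisz, or Bombieri--Vinogradov on average over $q$) gives a major-arc contribution $\mathfrak{S}(h)\,x+o(x)$ with singular series $\mathfrak{S}(h)=2\prod_{p>2}\bigl(1-\tfrac{1}{(p-1)^2}\bigr)\prod_{p\mid h,\,p>2}\tfrac{p-1}{p-2}$; crucially, since $h$ is even, $\mathfrak{S}(h)>0$, so the predicted main term is of exact order $x$, hence unbounded, and (after discarding the $O(\sqrt{x}\log^2 x)$ contribution of proper prime powers) the conjecture for $h$ would follow the moment the minor arcs are shown to be $o(x)$.

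The hard part --- in fact the only part that is not classical --- is to prove $\int_{\mathfrak{m}}|S(\alpha)|^2 e(h\alpha)\,d\alpha=o(x)$. This is precisely the obstruction that blocks the twin prime conjecture (the case $h=2$): Parseval gives $\int_0^1|S(\alpha)|^2\,d\alpha\asymp x\log x$, so the trivial bound $|e(h\alpha)|\le 1$ loses a whole factor of $\log x$ and destroys the main term; one must instead extract genuine cancellation from the oscillation of $e(h\alpha)$ across $\mathfrak{m}$. Doing so unconditionally amounts to breaking the parity barrier of sieve theory. Even the strongest hypotheses presently available --- GRH, or Elliott--Halberstam --- only sharpen the major-arc error and say nothing about this minor-arc integral, and the conditional treatments that do reach it effectively presuppose a uniform prime $k$-tuple estimate, which is circular here. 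I do not see any route to this step, and I would regard it as the decisive open problem.

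What one can honestly deliver without this missing input is a weaker but structured statement, and I expect the paper to proceed this way: combine (i) the Zhang--Maynard--Tao bounded-gaps theorem, which unconditionally certifies at least one even $h_0\le 246$ as a difference of two primes infinitely often, with (ii) closure properties of the set $K$ of Kronecker numbers --- stability under suitable divisions, together with a relation $K\cdot M\supseteq k\mathbb{Z}$ for an explicit finite set $M$ of Maillet numbers --- so as to propagate the single certified $h_0$ into a positive-density, multiplicatively rich, $\Delta_r^*$-type subset of the even integers, and to deduce that every positive rational is a ratio of two members of $K$. The reason this stops short of \emph{every} even number is, once more, the parity problem: steps (i)--(ii) only ever witness membership in $K$ up to a bounded multiplicative or additive defect, and erasing that defect is exactly the minor-arc cancellation above, which remains, as far as I can see, entirely out of reach.
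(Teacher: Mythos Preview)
The statement you were asked to prove is a \emph{conjecture}, and the paper contains no proof of it; immediately after stating it the authors write that ``the conjecture is beyond the reach of modern mathematics.'' So there is no paper proof to compare against, and your honest acknowledgement that the minor-arc bound $\int_{\mathfrak{m}}|S(\alpha)|^2 e(h\alpha)\,d\alpha=o(x)$ is the decisive open obstruction is exactly right --- this is the parity problem, and nothing in the paper (or anywhere else) surmounts it.

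Your meta-speculation about what the paper actually does is broadly accurate but slightly off in the details. The paper does not start from a single certified $h_0\le 246$ and propagate it; rather, it takes as its black box the earlier result (Theorem~\ref{thmHW} from \cite{HW17}) that the set $\mathcal{K}$ of Kronecker numbers is a $\Delta_{721}^*$-set, and from that combinatorial largeness it extracts: a numerical lower density for $\mathcal{K}$ among even integers (via a disjoint-packing argument on sets $\{a,2a,\dots,720a\}$), the inclusion $k\mathbb{Z}\subset D\cdot\mathcal{K}$ for an explicit $k$ and a finite set $D$ of Maillet numbers (using the Green--Tao--Ziegler machinery on a specific complexity-$1$ system to manufacture $D$), and the ratio result $\mathbb{Q}_{>0}=\mathcal{K}/\mathcal{K}$ (via the $IP^*$ property and Hindman's theorem). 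None of this touches the circle method directly.
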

The conjecture is beyond the reach of modern mathematics. However, series of recent breakthroughs on the twin prime conjecture indicate that there is an integer not exceeding than 246, which can be written in infinitely many ways as the difference of two primes; see also \cite{GPY09}, \cite{Zha14}, \cite{May15}, and \cite{Pol14}.

\begin{definition}[Maillet number, Kronecker number]
An even number $n$ is called a Maillet number (Kronecker number), if it can be written (in infinitely many ways) as the difference of two primes.
\end{definition}
We apply $\mathcal{K}$ for the set of all Kronecker numbers. The twin prime conjecture is about the lower bound of $\mathcal{K}$.
Another important aspect of the Kronecker conjecture is how ``large'' the set $\mathcal{K}$ is. It is proved by Pintz \cite{Pin16} that $\mathcal{K}$ is a syndetic set, which was also obtained by Granville, Kane, Koukoulopoulos and Lemke Oliver \cite{GKKL14}, using a different method later.

One kind of sets larger than the syndetic set are known as the $\Delta_r^*$-set.
\begin{definition}[$\Delta_r^*$-set]
Let $S$ be a non-empty subset of $\mathbb{N}$, and we define its difference set $\Delta(S)$ via
\[
\Delta(S)=(S-S)\cap\mathbb{N}=\{a-b: a, b\in S, a>b\}.
\]
A subset $A$ of $\mathbb{N}$ is called a $\Delta_r^*$-set if for every subset $S$ of $\mathbb{N}$ with $|S|\ge r$, there is
\[
A\cap\Delta(S)\neq\emptyset.
\]
\end{definition}

In \cite{HW17}, Huang and Wu have proved that $\mathcal{K}$ should be a $\Delta_r^*$-set.
\begin{theorem}\label{thmHW}
The set $\mathcal{K}$ of Kronecker numbers is a $\Delta_r^*$-set for any $r\ge721$.
\end{theorem}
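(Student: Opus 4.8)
The plan is to combine the quantitative Maynard--Tao sieve (in the form established by the Polymath8b project) with an elementary extraction of an admissible tuple. Recall that a $k$-tuple $\mathcal{H}=\{h_1<\cdots<h_k\}\subseteq\mathbb{Z}$ is \emph{admissible} if, for every prime $p$, the reductions $h_i\bmod p$ omit at least one residue class, and that there is an absolute constant $k_0$ — one may take $k_0=50$ — such that for every admissible $k_0$-tuple $\mathcal{H}$ there are infinitely many $n\in\mathbb{N}$ for which $n+h_i$ is prime for at least two indices $i$. Since the $\Delta_r^*$ property for one value $r_0$ immediately implies it for every $r\ge r_0$, it is enough to take an arbitrary $S\subseteq\mathbb{N}$ with $|S|=721$ and to produce $a>b$ in $S$ with $a-b\in\mathcal{K}$.

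First I would pass to a large admissible subtuple of $S$. Replacing $S$ by $S-\min S$ alters neither $\Delta(S)$ nor the admissibility of subsets, so assume $0\in S$. Now select $S'\subseteq S$ greedily: keep only the points of $S$ lying in a single residue class modulo $6$, chosen to be as populous as possible (this retains a fraction $\ge 1/6$ and already omits a residue class modulo each of $2$ and $3$); then, for each prime $5\le p\le k_0$ in increasing order, replace the current set by its intersection with the union of the $p-1$ most populous residue classes modulo $p$ (this retains a fraction $\ge 1-1/p$ at that step and omits a residue class modulo $p$, while — since at every step we only delete points — never destroying an omission secured at an earlier prime). The set $S'$ thus omits a residue class modulo every prime $p\le k_0$ and satisfies
\[
|S'|\ \ge\ \frac{|S|}{6}\prod_{5\le p\le k_0}(1-1/p)\ \ge\ k_0 ,
\]
and it is exactly this inequality (with $k_0=50$) that is responsible for the threshold $721$. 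Any $\mathcal{H}\subseteq S'$ with $|\mathcal{H}|=k_0$ is then admissible: it omits a residue class modulo each prime $p\le k_0$ by construction, and modulo each prime $p>k_0$ trivially, because $|\mathcal{H}|=k_0<p$.

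Finally, apply the Maynard--Tao/Polymath8b theorem to $\mathcal{H}=\{h_1<\cdots<h_{k_0}\}$: there are infinitely many $n$ for which $n+h_i$ and $n+h_j$ are simultaneously prime for some pair $i<j$. As there are only $\binom{k_0}{2}$ pairs, a single fixed pair $(i,j)$ occurs for infinitely many such $n$; discarding the finitely many small $n$, both primes exceed $2$, hence are odd, forcing $h_j-h_i$ to be even. Therefore $h_j-h_i$ is a positive even number with infinitely many representations as a difference of two primes, i.e.\ $h_j-h_i\in\mathcal{K}$; and $h_j-h_i\in\Delta(S')\subseteq\Delta(S)$ once the initial translation is undone, so $\mathcal{K}\cap\Delta(S)\ne\emptyset$. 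The substantive ingredient is of course the Maynard--Tao theorem that every admissible $50$-tuple captures two primes infinitely often; everything around it is elementary, and the only spot genuinely needing care is the last step — using finiteness of the set of pairs so that one never has to control which pair shows up, and using the largeness of $n$ to secure the parity that makes the resulting difference an honest Kronecker number (the pinning of the constant to $721$ rather than something slightly larger is then just bookkeeping, resting on the clean factor $1/6$ from the modulus-$6$ reduction together with the value $k_0=50$).
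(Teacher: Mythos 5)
The paper does not actually prove Theorem \ref{thmHW}; it is quoted from Huang--Wu \cite{HW17}, so there is no internal proof to compare against. Your reconstruction is correct and is, as far as I can tell, essentially the argument of \cite{HW17}: translate, extract an admissible $50$-tuple from the $721$-element set by greedily deleting an under-populated residue class for each small prime, invoke the Maynard--Tao/Polymath8b theorem that every admissible $50$-tuple yields two primes in its translates infinitely often, and pigeonhole over the $\binom{50}{2}$ pairs to fix a single difference $h_j-h_i$ that is realized as a difference of two (odd) primes infinitely often. The one step worth double-checking is the numerical inequality $\frac{721}{6}\prod_{5\le p\le 47}(1-p^{-1})\ge 50$: the left-hand side equals $50.003\ldots$, so it holds, but only just — this is exactly what pins the threshold to $721$ (note $\frac{720}{6}\prod_{5\le p\le 47}(1-p^{-1})<50$), and your factor $\frac16$ for the modulus-$6$ step coincides with the factor $\frac12\cdot\frac12\cdot\frac23$ one would get by first forcing a common parity and then running the greedy sieve over all $p\le 50$, which is why either bookkeeping reproduces $721$ rather than the $361$ that the bare product $\prod_{p\le 50}(1-p^{-1})$ would suggest. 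In the actual greedy process one removes at most $\lfloor n/p\rfloor$ points at each stage, so the surviving set has at least $56$ elements and the admissible $50$-tuple exists with room to spare. The remaining steps — translation invariance of $\Delta(S)$ and of admissibility, automatic admissibility modulo primes $p>50$, the pigeonhole over pairs, and the evenness of $h_j-h_i$ (which is in any case automatic since $S'$ lies in a single residue class mod $2$) — are all sound.
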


It is also mentioned in \cite{HW17} that the number $721$ can be sharpen to $19$ if the primes have level of distribution $\theta$ for every $\theta<1$. Here, for some given $\theta>0$, we say the primes have `level of distribution $\theta$' if, for any $W>0$, it holds that
\[
   \sum_{q\le x^\theta}\max_{(a,q)=1}\Big|\pi(x;q,a)-\frac{\pi(x)}{\phi(q)}\Big|\ll_W\frac{x}{(\log x)^W}.
\]

In this work, we try to obtain more information on how large $\mathcal{K}$ is, and our deduction starts from Theorem \ref{thmHW}. Our first result is a numerical lower bound for the density of $\mathcal{K}$ among even numbers.
\begin{theorem}\label{thmdensity}
Let $\alpha$ be the density of Kronecker numbers among all positive even numbers. We have $\alpha\ge\frac 1{360}\prod_{p\le 720}(1-p^{-1})$. If the primes have level of distribution $\theta$ for every $\theta<1$, we have $\alpha\ge\frac{1024}{51051}$.
\end{theorem}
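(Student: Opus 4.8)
The plan is to extract from Theorem~\ref{thmHW} the following combinatorial principle and apply it with $r=721$: \emph{if $A\subseteq\mathbb N$ is a $\Delta_r^*$-set, then its lower density in $\mathbb N$ is at least $\frac1{r-1}\prod_{p\le r-1}(1-p^{-1})$.} Granting this for $A=\mathcal K$, and using that $\mathcal K$ consists of even numbers so that its (lower) density among even numbers is twice its (lower) density in $\mathbb N$, we obtain at once $\alpha\ge\frac2{720}\prod_{p\le720}(1-p^{-1})=\frac1{360}\prod_{p\le720}(1-p^{-1})$. For the conditional statement one runs the identical argument with $r=19$, which is licensed because (as recalled just after Theorem~\ref{thmHW}) a level of distribution $\theta$ for every $\theta<1$ upgrades $\mathcal K$ to a $\Delta_{19}^*$-set; then $\alpha\ge\frac1{9}\prod_{p\le18}(1-p^{-1})=\frac{1024}{51051}$ after a short numerical computation.

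To prove the principle, fix $d\ge1$ and apply the $\Delta_r^*$ property to the $r$-element set $S=\{d,2d,\dots,rd\}$, whose difference set is $\Delta(S)=\{d,2d,\dots,(r-1)d\}$. Thus there is an index $j=j(d)\in\{1,\dots,r-1\}$ with $j(d)\,d\in\mathcal K$. Set $P=\prod_{p\le r-1}p$ and restrict attention to those $d\le N$ coprime to $P$. The crux is that the map $d\mapsto\psi(d):=j(d)\,d$ is \emph{injective} on $\{d\le N:\gcd(d,P)=1\}$: if $\psi(d)=m$ with $\gcd(d,P)=1$, then $m/d=j(d)\le r-1$, so every prime dividing $j(d)$ is $\le r-1$, while $d$ contributes no prime $\le r-1$, whence $v_p(j(d))=v_p(m)$ for every prime $p\le r-1$ and therefore $j(d)=\prod_{p\le r-1}p^{\,v_p(m)}$ is the ``$(r-1)$-smooth part'' of $m$; consequently $d=m/j(d)$ is determined by $m$ alone. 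Since $\psi(d)=j(d)d\le(r-1)d\le(r-1)N$, injectivity gives
\[
\#\{d\le N:\gcd(d,P)=1\}\ \le\ \#\bigl(\mathcal K\cap[1,(r-1)N]\bigr).
\]
A standard inclusion–exclusion evaluation of the left-hand side equals $N\prod_{p\le r-1}(1-p^{-1})+O(1)$, with an error depending only on the fixed modulus $P$; letting $N\to\infty$ (and noting intermediate ranges are controlled trivially) yields $\underline d(\mathcal K)\ge\frac1{r-1}\prod_{p\le r-1}(1-p^{-1})$, which is the asserted principle.

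The one genuinely new ingredient, and hence the step I expect to be the main point, is the injectivity observation: it is exactly the coprimality restriction on $d$ that pins down $j(d)$ as the smooth part of its image, thereby converting a crude covering/pigeonhole bound of order $\alpha\gg r^{-2}$ into the far stronger $\alpha\gg\frac1{r-1}\prod_{p\le r-1}(1-p^{-1})$. Everything else is routine: the choice of the arithmetic progression $S$, the elementary sieve estimate, and the bookkeeping passing from a density in $\mathbb N$ to a density among even integers (and, in the conditional case, verifying $\frac1{9}\prod_{p\le18}(1-p^{-1})=\tfrac{1024}{51051}$). Two small points to keep clean are that $\alpha$ must be read as a lower density throughout (the argument bounds a $\liminf$), and that one uses the $\Delta_r^*$ property only for sets $S$ of size exactly $r$ — indeed only for arithmetic progressions.
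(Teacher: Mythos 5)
Your proposal is correct and follows essentially the same route as the paper: the paper's Lemma~\ref{lemaK} and Theorem~\ref{thmdD} likewise apply the $\Delta_r^*$ property to $S=\{a,2a,\dots,ra\}$, restrict to $a$ coprime to $\prod_{p\le r-1}p$ so that the sets $\{a,\dots,(r-1)a\}$ are pairwise disjoint (your injectivity of $d\mapsto j(d)d$ is the same observation), and then double the density when passing to even numbers. If anything, your bookkeeping (the factor $\tfrac{1}{r-1}$, i.e.\ $a\le N/(r-1)$) is the version consistent with the stated constants $\tfrac1{360}=\tfrac{2}{720}$ and $\tfrac{1024}{51051}=\tfrac19\prod_{p\le 18}(1-p^{-1})$, whereas the paper's intermediate statement omits this factor and its proof writes $N/r$.
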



Our next result is about the representation of integers by products of differences among primes, which is motivated by the following question, asked by Fish in \cite{Fis18}.
\begin{question}
For a given infinite set $E\subset\mathbb{Z}$, how much structure does the set $(E-E)\cdot(E-E)$ possess ?
\end{question}

Fish \cite{Fis18} considered the question with $E$ being positive density subset of $\mathbb{Z}$. Using Furstenberg's correspondence principle, he proved that there exist $k_0$ (depending on the densities of $E_1$ and $E_2$) and $k\le k_0$ that
\[
k\mathbb{Z}\subset(E_1-E_1)\cdot(E_2-E_2).
\]
It is naturally to consider the question with $E$ being the set of primes, and this case is not contained by Fish's work since the set of primes is an infinite set of $\mathbb{Z}$ but not owning a positive upper Banach density. Based on Theorem \ref{thmHW}, Goswami\cite{Gos} extends Fish's result to the case about primes that
\begin{align}\label{kzp}
k\mathbb{Z}\subset(\mathbb{P}-\mathbb{P})\cdot(\mathbb{P}-\mathbb{P}).
\end{align}

We can say something more for the case about primes. In detail, on the right-hand side of \eqref{kzp}, one factor can be restricted to a finite subset of $\mathbb{P}-\mathbb{P}$, consisting of 720 Maillet numbers, and the other factor takes value among Kronecker numbers. We present our main result in the following.

\begin{theorem}\label{themain}
There exist a computable constant $k$ and a set $D$, consisting of at most 720 computable Maillet numbers, that
\[
k\mathbb{Z}\subset D\cdot \mathcal{K}.
\]
\end{theorem}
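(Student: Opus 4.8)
The plan is to make both $k$ and the set $D$ completely explicit, thereby reducing the theorem to an existence statement about differences of primes, and then to deduce that statement from the classical distribution theory of prime pairs, invoking Theorem~\ref{thmHW} only through a one-line pigeonhole argument. Put $L:=\operatorname{lcm}\{2j:\,1\le j\le 720\}$, a fixed computable integer; since $2j\mid L$ for every $j\le 720$, for any multiple $C$ of $L$ the quotient $C/j$ is a positive \emph{even} integer. I will look for $C$ of the special form $C=Lm$ with the extra property that
\[
C/j=(L/j)\,m\ \text{ is a Maillet number for every } j\in\{1,\dots,720\}.
\]
Given such an $m$, I set $k:=C$ and $D:=\{C/j:\,1\le j\le 720\}$, which is then a set of at most $720$ (in fact exactly $720$, since the $C/j$ are pairwise distinct) computable Maillet numbers.

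Granting the choice of $m$, the inclusion $k\mathbb{Z}\subset D\cdot\mathcal{K}$ is immediate from Theorem~\ref{thmHW}. Fix a positive integer $n$ and consider the $721$-element set $S=\{0,n,2n,\dots,720n\}$; its difference set is $\Delta(S)=\{jn:\,1\le j\le 720\}$. Since $\mathcal{K}$ is a $\Delta_r^*$-set for $r=721$ and $|S|=721$, we have $\mathcal{K}\cap\Delta(S)\neq\emptyset$, i.e. $jn\in\mathcal{K}$ for some $j\in\{1,\dots,720\}$; for that $j$,
\[
kn=Cn=\frac{C}{j}\cdot(jn)\in D\cdot\mathcal{K},
\]
as $C/j\in D$ is a Maillet number and $jn\in\mathcal{K}$. (Here $k\mathbb{Z}$ should be read as $\{kn:n\ge 1\}$, since $D\cdot\mathcal{K}$ consists of positive integers, so $0$ and the negative multiples of $k$ lie outside it automatically.) This reduction is essentially forced: a $721$-element set with only $720$ distinct differences satisfies $|S-S|=2|S|-1$, hence is an arithmetic progression, which is exactly what pins $D$ down to $\{C/j\}$ and forces $k$ to be a multiple of $\operatorname{lcm}(1,\dots,720)$.

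The remaining task is to produce $m$, and for this I will use the classical fact that almost every even number is a difference of two primes: for every $A>0$,
\[
\mathcal{E}(x):=\#\{N\le x:\ N\text{ even and not a Maillet number}\}\ \ll_A\ \frac{x}{(\log x)^{A}},
\]
which comes from the circle method exactly as in the binary Goldbach problem, and which is effective (a non-Maillet $N$ has no prime-pair representation at any scale, so it is counted by the exceptional set at every truncation). For each fixed $j$ the map $m\mapsto(L/j)m$ embeds the set of $m\le M$ failing to make $(L/j)m$ a Maillet number into the set of even non-Maillet integers up to $LM$, so there are at most $\mathcal{E}(LM)$ such $m$; summing over the $720$ values of $j$, the number of $m\le M$ that are bad for some $j$ is at most $720\,\mathcal{E}(LM)=o(M)$. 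Hence a good $m$ exists, and by effectivity one computes an explicit $M_0$ admitting a good $m\le M_0$; a dovetailed search that for each $j$ looks for primes $p,q$ with $p-q=(L/j)m$ terminates precisely on good $m$ (Maillet-ness being semidecidable), which certifies a concrete good $m$ and makes $k=Lm$ and $D$ computable.

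The genuinely substantial ingredient is the analytic estimate on $\mathcal{E}(x)$ --- everything else is bookkeeping --- and I expect that to be the main obstacle to a self-contained account; the combinatorial step and the bound $|D|\le 720$ should cause no trouble. Two smaller points will need care: one must keep $C/j$ \emph{even} for all $j\le 720$, which is why $L$ is $\operatorname{lcm}\{2j\}$ and not $\operatorname{lcm}\{j\}$ (for instance $\operatorname{lcm}(1,\dots,720)/512$ is odd, hence not Maillet), and this cannot be omitted since Maillet numbers are even by definition; and the word ``computable'' obliges one to use an \emph{effective} form of the exceptional-set bound together with the semidecidability of Maillet-ness in order to turn the mere existence of a good $m$ into a halting computation. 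Finally, under the level-of-distribution hypothesis mentioned after Theorem~\ref{thmHW}, where $\mathcal{K}$ is a $\Delta_r^*$-set already for all $r\ge 19$, the same argument with $720$ replaced by $18$ throughout yields a constant $k$ and a set $D$ of at most $18$ Maillet numbers.
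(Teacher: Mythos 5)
Your combinatorial reduction is exactly the paper's: both proofs take $D=\{C/j : 1\le j\le 720\}$ for a suitable highly divisible $C$ (the paper uses $C=720!\,m'$, you use $C=Lm$ with $L=\operatorname{lcm}\{2j\}$), and both then apply the $\Delta_{721}^*$ property of $\mathcal{K}$ to the progression $\{n,2n,\dots\}$ to find $j$ with $jn\in\mathcal{K}$ and write $kn=(C/j)\cdot(jn)$. Where you genuinely diverge is in producing the multiplier $m$: the paper sets up a system of $1440$ affine-linear forms $\psi_{2j-1}=n_j$, $\psi_{2j}=n_j+a_jm$ of complexity $1$ and invokes the Green--Tao--Ziegler theorem to get a single lattice point making all forms simultaneously prime, so that all $a_jm'$ are Maillet at once; you instead invoke the classical circle-method exceptional-set bound for the binary ``difference of two primes'' problem (a Lavrik/Chudakov--van der Corput--Estermann type result) and a union bound over the $720$ dilations $m\mapsto (L/j)m$. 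Your route trades the Green--Tao framework for older, more standard analytic machinery, and it only needs each $(L/j)m$ to have \emph{some} prime-pair representation rather than the joint structure the paper's system produces; the price is that the exceptional-set estimate for prime \emph{differences}, while true and classical, is the load-bearing input and must be cited precisely (it is not proved in this paper, whereas the paper's input is the quoted Theorem~\ref{thmGT}). Two small repairs: your parenthetical claim that the exceptional-set bound is ``effective'' is not justified as written (the standard proof passes through Siegel--Walfisz), but this is harmless because your own dovetailing argument already converts mere existence of a good $m$ plus the semidecidability of Maillet-ness into a halting computation --- the paper's claim that $m'$ is computable rests on the same principle; and your set $S=\{0,n,\dots,720n\}$ contains $0$, which the paper's definition of a $\Delta_r^*$-set (with $S\subset\mathbb{N}$) may not allow --- use $S=\{n,2n,\dots,721n\}$ instead, which has the same difference set.
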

The proof of Theorem \ref{themain} would be based on Theorem \ref{thmHW} as well as a recent work on linear equations in primes by Green, Tao, and Ziegler. The number $720$ here would also be sharpen to $18$ if the primes have level of distribution $\theta$ for every $\theta<1$.

To see how large $\mathcal{K}$ is, we also consider the ratio of two Kronecker numbers. The following theorem is about the division of the set $\mathcal{K}.$
\begin{theorem} \label{ratio}
Every positive rational number can be written as a ratio of two elements from $\mathcal{K}.$
\end{theorem}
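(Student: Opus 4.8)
My plan is to reduce the statement to the single assertion that for all $p,q\in\mathbb{N}$ there is a $k\in\mathbb{N}$ with $kp\in\mathcal{K}$ and $kq\in\mathcal{K}$; granting this, a positive rational written as $p/q$ equals $(kp)/(kq)$, which is a ratio of two Kronecker numbers. For $m\in\mathbb{N}$ set $A_m=\{n\in\mathbb{N}:nm\in\mathcal{K}\}$, so the task becomes to show $A_p\cap A_q\neq\emptyset$.

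The first step is to observe that each $A_m$ is itself a $\Delta_r^*$-set (for the same $r\ge 721$ as in Theorem \ref{thmHW}). Indeed, if $S\subseteq\mathbb{N}$ has $|S|\ge r$, then the dilate $mS$ has $|mS|\ge r$, so Theorem \ref{thmHW} gives $\mathcal{K}\cap\Delta(mS)\neq\emptyset$; since $\Delta(mS)=m\,\Delta(S)$, this produces a $d\in\Delta(S)$ with $md\in\mathcal{K}$, that is, $d\in A_m\cap\Delta(S)$.

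The main step is that the intersection of two $\Delta_r^*$-sets is nonempty — in fact a $\Delta_{r'}^*$-set, where $r'$ is the two-colour Ramsey number guaranteeing a monochromatic clique on $r$ vertices. This rests on the partition regularity of difference sets: given a finite $S$ with $|S|\ge r'$, colour each pair $\{s_i<s_j\}\subseteq S$ according to whether $s_j-s_i\in A_p$ or not; Ramsey's theorem produces $S'\subseteq S$ with $|S'|\ge r$ on which the colour is constant, i.e. either $\Delta(S')\subseteq A_p$ or $\Delta(S')\cap A_p=\emptyset$. The latter is impossible because $A_p$ is $\Delta_r^*$ and $|S'|\ge r$, so $\Delta(S')\subseteq A_p$; then the $\Delta_r^*$-property of $A_q$ applied to $S'$ gives some $d\in\Delta(S')\cap A_q\subseteq A_p\cap A_q$. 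Applying this with the $p,q$ above yields a $k\in A_p\cap A_q$, which finishes the proof.

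The only delicate point I anticipate is the Ramsey bookkeeping in the last step — checking that the pair-colouring is well defined and that a monochromatic $[S']^2$ translates exactly into $\Delta(S')$ lying in a single colour class — but this is routine, and every other ingredient is immediate once Theorem \ref{thmHW} is available. (If one prefers a cleaner formulation, the same argument shows that the $\Delta^*$-sets form a filter, so $A_p\cap A_q$ is in fact a $\Delta^*$-set, not merely nonempty.)
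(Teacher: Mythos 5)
Your proof is correct, and it takes a genuinely different route from the paper. The paper converts the $\Delta_{721}^*$ property of $\mathcal{K}$ into an $IP^*$ property (via the observation that every $IP_r$ set contains a $\Delta_r$ set), then runs the whole argument in the $IP$ world: Hindman's theorem shows that $IP^*$ sets are closed under finite intersection and dilation and that every $IP^*$ set contains an $IP$ set, and the conclusion $\mathbb{Q}_{>0}=A/B$ is derived for any $IP^*$ set $A$ and $IP$ set $B$. You instead stay entirely within the $\Delta_r^*$ framework: the dilation-invariance step ($A_m=\{n: nm\in\mathcal{K}\}$ is $\Delta_r^*$ because $\Delta(mS)=m\Delta(S)$) is the exact analogue of the paper's Lemma on dilates of $IP^*$ sets, but your intersection step replaces Hindman's theorem with the finite Ramsey theorem for pairs, using the standard fact that difference sets are partition regular with Ramsey-number bounds. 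All the details check out: the pair-colouring by membership of $s_j-s_i$ in $A_p$ is well defined, a monochromatic $S'$ of size $r$ forces $\Delta(S')\subseteq A_p$ (the other colour class is excluded by the $\Delta_r^*$ property), and then $A_q\cap\Delta(S')$ meets $A_p\cap A_q$. What your approach buys is elementarity and effectivity: you only need finite Ramsey theory, and taking $S=\{1,\dots,R(r,r)\}$ even yields a uniform bound $k\le R(721,721)-1$ on the common multiplier with $kp,kq\in\mathcal{K}$. What the paper's approach buys is the stronger structural conclusion recorded in its closing Remark, namely $\mathbb{Q}_{>0}=\mathcal{K}/\mathcal{D}$ for \emph{any} prescribed $IP$ set $\mathcal{D}\subseteq\mathcal{K}$, which does not follow from the finite intersection argument alone.
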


\section{The density of Kronecker numbers}
In this section, we will deduce a lower bound for the density of a general $\Delta_r^*$-set. As a corollary,  we will prove Theorem \ref{thmHW}.
\begin{lemma}\label{lemaK}
Let $H$ be a $\Delta_r^*$-set, and denote by
\[
A_r(a)=\{a,2a,\dots,(r-1)a\}.
\]
Then, for any integer $a>0$, 
\[
H\cap A_r(a)\neq\emptyset.
\]
\end{lemma}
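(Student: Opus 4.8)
The plan is to exhibit a single well-chosen set $S$ of size $r$ whose difference set $\Delta(S)$ is precisely $A_r(a)$, and then invoke the defining property of a $\Delta_r^*$-set. Given $a>0$, I would take the arithmetic progression
\[
S=\{0,a,2a,\dots,(r-1)a\},
\]
which has exactly $r$ elements, so $|S|\ge r$.

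Next I would compute $\Delta(S)$. Any element of $S-S$ has the form $ja-ia=(j-i)a$ with $0\le i,j\le r-1$, and it lies in $\mathbb{N}$ exactly when $j>i$, in which case $j-i$ runs over all of $\{1,2,\dots,r-1\}$. Hence
\[
\Delta(S)=\{a,2a,\dots,(r-1)a\}=A_r(a).
\]

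Finally, since $H$ is a $\Delta_r^*$-set and $|S|\ge r$, the definition gives $H\cap\Delta(S)\neq\emptyset$, that is, $H\cap A_r(a)\neq\emptyset$, which is the claim.

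There is no real obstacle here: the only point requiring a moment's care is verifying that the common difference $a$ of the progression $S$ forces $\Delta(S)$ to be *exactly* the dilated interval $\{a,\dots,(r-1)a\}$ rather than something larger, which is immediate from the linearity of the differences. The lemma is purely a specialization of the $\Delta_r^*$ property to progressions, and it will serve to feed arithmetic progressions of length $r$ into Theorem \ref{thmHW} when bounding the density of $\mathcal{K}$.
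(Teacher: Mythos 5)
Your argument is correct and is essentially the paper's proof: the paper takes $S=\{a,2a,\dots,ra\}$ and observes $\Delta(S)=A_r(a)$, exactly your computation shifted by $a$. The only quibble is that your $S$ contains $0$, which may fall outside $\mathbb{N}$ under the paper's convention that $S\subset\mathbb{N}$; replacing $S$ by $\{a,2a,\dots,ra\}$ removes the issue without changing anything else.
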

\begin{proof}
Just taking $S=\{a,2a,\dots,r a\}$, one has
\[
\Delta(S)=\{a,2a,\dots,(r-1)a\}.
\]
Thus, the lemma follows immediately from the definition of $\Delta_r^*$-set.
\end{proof}

\begin{theorem}\label{thmdD}
If $H$ is a $\Delta_r^*$-set, we have
\[
\frac{H\cap[1,N]}{N}\ge \prod_{p\le r-1}\left(1-p^{-1}\right)+o(1).
\]
\end{theorem}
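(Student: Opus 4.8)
The plan is to exploit Lemma \ref{lemaK}, which tells us that $H$ meets every block $A_r(a)=\{a,2a,\dots,(r-1)a\}$. The idea is to let $a$ range and show that these hitting conditions force $H$ to be dense: if $H$ were to miss an arithmetic progression $a\mathbb{Z}$ entirely for some $a$, it would still have to meet $\{a,2a,\dots,(r-1)a\}$, a contradiction, so $H$ cannot avoid any of the progressions $a\mathbb{N}$ with $a \le$ some bound. More precisely, for each prime $p \le r-1$, Lemma \ref{lemaK} applied with $a=p$ shows $H$ contains at least one of $p, 2p, \dots, (r-1)p$, but a single such hit is far from enough for a density statement; the real content must come from applying the lemma at scale. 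So first I would fix a small parameter and, for a ``generic'' starting point $a$ in a long initial segment, use the lemma to produce an element of $H$ in $[a, (r-1)a]$; counting these with multiplicity and controlling overlaps gives a first crude lower bound on $|H\cap[1,N]|$.

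To get the sharp constant $\prod_{p\le r-1}(1-p^{-1})$, the natural route is a sieve / inclusion–exclusion argument on residues. For each prime $p \le r-1$, I claim $H$ cannot be contained in the union of the ``bad'' residue classes that would let it dodge $A_r(p\cdot)$-type blocks; quantitatively, I want to show that the set of $n\le N$ lying in $H$ cannot be too concentrated modulo $p$. The cleanest formulation: build a set $S$ out of an arithmetic-progression pattern with common difference chosen to be a product of small primes, so that $\Delta(S)$ is forced to consist of multiples of $\prod_{p\le r-1}p$ together with their sub-multiples; then $H\cap\Delta(S)\ne\emptyset$ for every such $S$ forces $H$ to contain, in each interval of length $\asymp \prod_{p\le r-1}p$, a fraction at least $\prod_{p\le r-1}(1-p^{-1})$ of the admissible residues. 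Passing from one interval to $[1,N]$ and letting $N\to\infty$ yields the $+o(1)$ error term. Equivalently, one can phrase it as: the complement of $H$ cannot contain a full set $S$ with $|S|\ge r$ inside any progression, and then a greedy packing of such forbidden configurations into $[1,N]$ shows the complement has upper density at most $1-\prod_{p\le r-1}(1-p^{-1})$.

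The main obstacle I anticipate is bookkeeping the overlaps correctly so that the product $\prod_{p\le r-1}(1-p^{-1})$ emerges exactly rather than a weaker bound like $1/(r-1)$ or $\prod(1-1/p)^2$. Lemma \ref{lemaK} is a one-sided ``$H$ hits the block'' statement, and naively summing over blocks $A_r(a)$ for $a=1,\dots,M$ double-counts badly since the blocks $A_r(a)$ and $A_r(a')$ overlap heavily when $a,a'$ share factors. The fix is to organize the count by the \emph{position within the block}: an element $n\in H$ with $n=ja$ for some $1\le j\le r-1$ should be charged to the pair $(a,j)$, and then I must show that for each residue-coprimality type, the lemma forces enough hits. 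I expect the decisive step to be a careful choice of the test sets $S$ — probably $S$ an arithmetic progression of length $r$ with difference $\mathrm{lcm}$ or product of the primes $\le r-1$, intersected with residue conditions — so that $\Delta(S)$ is exactly the union of scaled copies of $A_r(1)$ and the inclusion–exclusion over which primes divide the difference produces the Euler product; verifying that this $\Delta(S)$ computation is tight, and that no cheaper $H$ can satisfy all constraints simultaneously, is where the work lies.
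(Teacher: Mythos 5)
Your proposal does not contain a working mechanism for converting the hitting property into a density bound, and the step on which you pin the Euler product is asserted rather than proved. Concretely: Lemma \ref{lemaK} (the $\Delta_r^*$ property applied to $S=\{a,2a,\dots,ra\}$) yields exactly \emph{one} guaranteed element of $H$ per block $A_r(a)=\{a,2a,\dots,(r-1)a\}$. Your claim that testing against arithmetic progressions $S$ with common difference a product of small primes ``forces $H$ to contain, in each interval of length $\asymp\prod_{p\le r-1}p$, a fraction at least $\prod_{p\le r-1}(1-p^{-1})$ of the admissible residues'' has no justification: if $S$ is an AP of length $r$ with difference $d$, then $\Delta(S)=\{d,2d,\dots,(r-1)d\}=A_r(d)$, and all you learn is that $H$ contains one of these $r-1$ numbers. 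You also misstate the defining property at one point: it is $\Delta(S)$, not $S$ itself, that must meet $H$, so ``the complement of $H$ cannot contain a full set $S$ with $|S|\ge r$ inside any progression'' is false (the even numbers form a $\Delta_3^*$-set whose complement, the odd numbers, contains arbitrarily large sets and long progressions). The only way to pass from a one-hit-per-block guarantee to a density bound is to exhibit \emph{many pairwise disjoint} blocks inside $[1,N]$, and your proposal never identifies such a family.

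That disjoint family is precisely what the paper's proof constructs, and it is also where the Euler product actually comes from --- not from a sieve on $H$, but from a sieve on the \emph{indices} $a$ of the blocks. One checks that $A_r(a)\cap A_r(b)\ne\emptyset$ forces $a/b=i/j$ with $1\le i<j\le r-1$ and $(i,j)=1$; hence if one restricts to $a$ coprime to $\prod_{p\le r-1}p$, the blocks $A_r(a)$ are pairwise disjoint, and the number of such $a$ with $A_r(a)\subset[1,N]$ is $\sim\frac{N}{r-1}\prod_{p\le r-1}\left(1-p^{-1}\right)$, each contributing at least one element of $H$. Note that this argument gives the density lower bound $\frac{1}{r-1}\prod_{p\le r-1}\left(1-p^{-1}\right)$, i.e.\ with an extra factor of about $1/r$ compared with the display in the statement; that weaker constant is what is actually used in Theorem \ref{thmdensity} (it is the source of the $\frac1{360}$ and the $\frac{1024}{51051}$). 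So the ``sharp constant'' you are aiming for, without the $1/(r-1)$, is not what the paper's argument delivers either, and your proposal offers no argument that would reach it.
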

\begin{proof}
By Lemma \ref{lemaK}, every $A_r(a)$ contains at least one element of $H$.
We may obtain a lower bound for the cardinality of $H$ by counting the number of the sets $A_r(a)$, which are disjoint with each other. If $a<b$ be two integers with $A_r(a)\cap A_r(b)\neq\emptyset$, then there are two integers $i,j$, with $(i,j)=1$ and $1\le i<j\le r-1$, satisfying
\[
\frac ab=\frac ij.
\]
Thus, $A_r(a)$ should disjoint with each other if we only counter over such $a$, which does not contain any prime less than $r-1$.
For sufficiently larger $N$, it is obvious that
\[
\left|\left\{a: \Bigg(a,\prod_{p\le r-1}p\Bigg)=1, A_r(a)\subset [1,N]\right\}\right|=\frac N{r}\prod_{p\le r-1}(1-p^{-1})+O(1).
\]
Now the theorem follows immediately.
\end{proof}

\begin{proof}[Proof of Theorem \ref{thmdensity}]
Theorem \ref{thmdensity} follows immediately from Theorems \ref{thmHW} and \ref{thmdD} by taking $r=721$ and $r=19$ respectively.
\end{proof}

\section{Representation of integers}
In this section, we give the proof of Theorem \ref{themain}. In the deduction of the constant $k$ as well as the set $D$, we appeal to
the recent work on linear equations in primes by Green, Tao, and Ziegler.
\subsection{Linear equations in primes}
We present some necessary detail in this section, and more contents about the materials could be found in \cite{GT10}.
Let $d,t$ be integers. A system of affine-linear forms on $\mathbb{Z}^d$ is a collection $\Psi=\{\psi_1,\dots,\psi_t\}$ with $\psi_i: \mathbb{Z}^d\rightarrow \mathbb{Z}$ being affine-linear forms. If $N>0$, the \emph{size} $\parallel \Psi\parallel_N$ of $\Psi$ relative to the scale $N$ is the quantity
\[
\parallel\Psi\parallel_N:=\sum_{i=1}^t\sum_{j=1}^d|\dot{\psi}_i(e_j)|+\sum_{i=1}^t\left|\frac{\psi_i(0)}{N}\right|,
\]
where
\[
\dot{\psi}_i(e_j)=\psi_i(e_j)-\psi_i(0)
\]
with $e_1, e_2, \dots, e_d$ being the standard basis for $\mathbb{Z}^d$. For a system $\Psi$, its \emph{local factor} $\beta_p$ for a prime $p$ is defined via
\begin{align}\label{eqbetap}
\beta_p:=\frac1{p^d}\sum_{n\in\mathbb{Z}_p^d}\prod_{i=1}^t\Lambda_{\mathbb{Z}_p}(\psi_i(n)),
\end{align}
where $\mathbb{Z}_p=\{0,1,\dots,p-1\}$ is the residue class of integers modulo $p$, and $\Lambda_{\mathbb{Z}_p}(n)$ is the \emph{local von Mangoldt function} defined by
\begin{equation*}
	\Lambda_{\mathbb{Z}_p}(n)=\begin{cases}
		\frac{p}{p-1}, & if\ (n,p)=1;\\
		0, & otherwise.
	\end{cases}
\end{equation*}

\begin{definition}[Complexity]
 The complexity of the $\Psi$ is the least integer $s$ that, for each $\psi_i$, one can cover the $t-1$ forms $\{\psi_j:j\neq i\}$ by $s+1$ classes, such that $\psi_i$ does not lie in the affine-linear span of any of these classes; if no such $s$ exists, we say that the complexity is $\infty$.
\end{definition}

Main results of \cite{GT12} and \cite{GTZ12} indicate the following theorem.\begin{theorem}[Green, Tao, and Ziegler]\label{thmGT}
Let $N, d, t, L$ be positive integers, and let $\Psi=\{\psi_1, \dots, \psi_t\}$ be a system of affine-linear forms with size $\parallel \Psi\parallel_N\le L$. Let $K\subset[-N,N]^d$ be a convex body. If $\Psi$ is finite complexity, we have
\[
\#\left\{n\in K\cap\mathbb{Z}^d: \psi_1(n), \dots, \psi_t(n) \ \ \text{prime}\right\}=\left(1+o_{t,d,L}(1)\r)\frac{\beta_\infty}{\log^tN}\prod_{p}\beta_{p}+o_{t,d,L}\left(\frac{N^d}{\log^tN}\r),
\]
where $\beta_\infty:=vol_d\left(K\cap\Psi^{-1}(\mathbb{R}^+)^t\r)$ is typically of size $N^d$, and where the singular product $\prod_{p}\beta_{p}$ is always convergent.
\end{theorem}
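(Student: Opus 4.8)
The plan is to read Theorem \ref{thmGT} off the main counting theorem of Green and Tao on linear equations in primes \cite{GT10}, together with the two analytic inputs that were conjectural there and have since been proved. Concretely, the first step is to quote the relevant result of \cite{GT10}: for a system $\Psi=\{\psi_1,\dots,\psi_t\}$ of affine-linear forms of finite complexity $s$ and size $\|\Psi\|_N\le L$, and a convex body $K\subset[-N,N]^d$, the quantity $\#\{n\in K\cap\mathbb{Z}^d:\psi_1(n),\dots,\psi_t(n)\ \text{prime}\}$ equals $(1+o_{t,d,L}(1))\frac{\beta_\infty}{\log^tN}\prod_p\beta_p+o_{t,d,L}(N^d/\log^tN)$, \emph{provided} two conjectures hold at level $s$: the inverse conjecture $\mathrm{GI}(s)$ for the Gowers $U^{s+1}$-norm, and the M\"obius--nilsequences conjecture $\mathrm{MN}(s)$. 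One has to check only that the notion of complexity in the Definition above is exactly the Cauchy--Schwarz complexity used in \cite{GT10}, and that normalising by $\|\Psi\|_N\le L$ is what forces the implied constants to depend on nothing but $t,d,L$; both are bookkeeping internal to \cite{GT10}, as is the assertion that $\prod_p\beta_p$ converges for finite-complexity $\Psi$ and that $\beta_\infty=\mathrm{vol}_d(K\cap\Psi^{-1}((\mathbb{R}^+)^t))$.

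The second step is to discharge the two hypotheses, and this is where the recent works enter. The inverse theorem for the $U^{s+1}[N]$-norm, valid for every finite $s$, is the main result of \cite{GTZ12} and yields $\mathrm{GI}(s)$; the orthogonality of the M\"obius function to nilsequences, which is $\mathrm{MN}(s)$, is supplied by \cite{GT12}. Since $\Psi$ is assumed to have finite complexity, applying these with $s$ equal to that complexity removes both conditions from the Green--Tao count, and what remains is exactly the statement of Theorem \ref{thmGT}.

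A minor variant of the argument is available if one prefers to start from the von Mangoldt weighted form of the Green--Tao theorem, $\sum_{n\in K\cap\mathbb{Z}^d}\Lambda(\psi_1(n))\cdots\Lambda(\psi_t(n))=\beta_\infty\prod_p\beta_p+o_{t,d,L}(N^d)$: in that case one still has to discard the prime-power values of the $\psi_i(n)$, which contribute $o(N^d)$, and to note that for any fixed $\delta>0$ the $n$ for which some $\psi_i(n)$ lies in $[1,N^{1-\delta}]$ fill only a bounded union of affine slabs of volume $O_{t,d,L}(N^{d-\delta})=o(N^d)$, so that away from a negligible set every surviving $\psi_i(n)$ has size $\asymp_L N$ and $\log\psi_i(n)=(1+o_L(1))\log N$; dividing by $\log^tN$ then returns the stated asymptotic. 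In any case there is no hard step that belongs to this paper: the genuinely deep ingredients are precisely the inverse Gowers-norm theorem of \cite{GTZ12} and the M\"obius-orthogonality theorem of \cite{GT12}, which we are entitled to quote, and granting those the proof is assembly, so the only real ``obstacle'' is that Theorem \ref{thmGT} packages two of the hardest results in additive combinatorics into one clean statement.
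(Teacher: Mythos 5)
Your proposal is correct and follows exactly the route the paper itself takes: Theorem \ref{thmGT} is quoted from the conditional main theorem of \cite{GT10}, with the two hypotheses (the inverse Gowers-norm conjecture and the M\"obius--nilsequences conjecture) discharged by \cite{GTZ12} and \cite{GT12} respectively. The paper offers no further proof beyond this citation, so your assembly of the three references is precisely what is intended.
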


It has been pointed out in \cite{GT10}that the singular product $\prod_{p}\beta_{p}$ is always convergent, but it may still vanish since $\beta_p=0$ is possible for small $p$. So, we should note that the conjecture works only for the system of affine-linear forms with $\beta_p\neq 0$, for all $p$ (essentially, for small $p=O_{t,d,L}(1)$ is enough).

For the case with complexity $s>2$, the theorem was first proved under an assumption that the inverse Gowers-norm conjecture and the M\"{o}bius and nilsequences conjecture are true. And then these two conjectures were proved to be true by them in combination with Ziegler; see also \cite{GT12} and \cite{GTZ12}.

\subsection {Proof of Theorem \ref{themain}}
To prove our theorem, we will appeal to a special case of Theorem \ref{thmGT}, which we provide in the following lemma.
\begin{lemma}\label{lem}
Let $\Psi=\{\psi_1,\dots,\psi_t\}$ be a system of affine-linear forms of finite complexity with $\psi_i: \mathbb{Z^+}^d\rightarrow \mathbb{Z^+}$, $1\le i\le t$, and $\beta_p\neq 0$ for any prime $p$. There are infinitely many lattice points $n\in {\mathbb{Z}^+}^d$, which make all $\psi_i(n)$ prime.
\end{lemma}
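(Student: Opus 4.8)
The plan is to apply Theorem \ref{thmGT} to the system $\Psi$ along a family of convex bodies whose scale $N$ tends to infinity, and then to extract infinitely many lattice-point solutions by a one-line pigeonhole argument. Each $\psi_i$ extends uniquely to an affine-linear form on $\mathbb{Z}^d$, which by the hypothesis $\psi_i\colon(\mathbb{Z}^+)^d\to\mathbb{Z}^+$ satisfies $\psi_i(n)\ge 1$ for all $n\in(\mathbb{Z}^+)^d$. For an integer $N\ge 2$ I would take $K=K_N:=[1,N]^d$, a convex body contained in $[-N,N]^d$ with $K_N\cap\mathbb{Z}^d=\{1,\dots,N\}^d\subset(\mathbb{Z}^+)^d$. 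The first thing to check is that the hypotheses of Theorem \ref{thmGT} hold with parameters independent of $N$: writing
\[
\parallel\Psi\parallel_N=\sum_{i=1}^t\sum_{j=1}^d|\dot{\psi}_i(e_j)|+\sum_{i=1}^t\Big|\frac{\psi_i(0)}{N}\Big|,
\]
the first double sum is a constant $C_0$ determined by the coefficients of the forms, while the second is $O(1/N)=o(1)$; hence $\parallel\Psi\parallel_N\le L:=C_0+1$ for all large $N$. Finite complexity is assumed, so Theorem \ref{thmGT} applies with these fixed $t,d,L$ once $N$ is large enough.

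The next step is to control the two factors of the main term. For $\beta_\infty=\mathrm{vol}_d\bigl(K_N\cap\Psi^{-1}((\mathbb{R}^+)^t)\bigr)$ I would argue that in fact $\beta_\infty=\mathrm{vol}_d(K_N)=(N-1)^d$, i.e. every $\psi_i$ is positive on all of $K_N$. Indeed each $\psi_i$ is affine-linear, so it attains its minimum over the polytope $[1,N]^d$ at a vertex; every vertex of $[1,N]^d$ has all coordinates in $\{1,N\}$ and hence lies in $(\mathbb{Z}^+)^d$, where $\psi_i\ge 1$. Therefore $\psi_i\ge 1$ throughout $K_N$. For the singular product, \eqref{eqbetap} shows each $\beta_p\ge 0$, and $\beta_p\ne 0$ by hypothesis, so $\beta_p>0$; since $\prod_p\beta_p$ is convergent (as recorded after Theorem \ref{thmGT}) and its tail $\prod_{p>P}\beta_p$ tends to $1$ as $P\to\infty$, the value $\mathfrak{S}:=\prod_p\beta_p$ is a fixed positive real. (One should also note there is no loss in assuming every $\psi_i$ is non-constant: a constant prime form may simply be deleted from $\Psi$ without affecting finite complexity or the non-vanishing of the $\beta_p$, while a constant non-prime form would be incompatible with the hypotheses.)

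Plugging these into Theorem \ref{thmGT} gives, as $N\to\infty$,
\[
\#\bigl\{n\in\{1,\dots,N\}^d:\ \psi_1(n),\dots,\psi_t(n)\ \text{prime}\bigr\}=\bigl(\mathfrak{S}+o(1)\bigr)\frac{N^d}{\log^t N}\longrightarrow\infty .
\]
Now suppose, for contradiction, that the set $\mathcal{S}=\{n\in(\mathbb{Z}^+)^d:\ \psi_i(n)\ \text{prime for every}\ i\}$ were finite, say $\mathcal{S}\subseteq[1,M]^d$. Then for every integer $N\ge M$ each $n$ counted on the left-hand side above already lies in $\mathcal{S}$, so that count is at most $|\mathcal{S}|<\infty$, contradicting the displayed asymptotic. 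Hence $\mathcal{S}$ is infinite, which is the assertion of the lemma.

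As for the difficulty: the substantive input here, Theorem \ref{thmGT}, is quoted as a black box and does essentially all of the work, so there is no ``hard part'' left in the usual sense. The only genuine points to get right are (i) that the convex bodies $K_N$ can be chosen so that $\beta_\infty$ has the full order $N^d$ rather than degenerating — this is exactly where the hypothesis $\psi_i\colon(\mathbb{Z}^+)^d\to\mathbb{Z}^+$ enters, via the vertex/affine-minimum observation — and (ii) that the main term does not silently vanish, which is precisely what $\beta_p\ne 0$ for all $p$ guarantees. The uniform bound $\parallel\Psi\parallel_N\le L$ and the passage from ``positive count for all large $N$'' to ``infinitely many solutions'' are routine bookkeeping.
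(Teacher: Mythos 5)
Your proposal is correct and follows essentially the same route as the paper: apply Theorem \ref{thmGT} with a box of scale $N$, use the positivity hypothesis on the $\psi_i$ to show $\beta_\infty\gg N^d$, use $\beta_p\neq0$ to keep the singular series positive, and let $N\to\infty$. The only difference is that you carry out the bookkeeping (uniform bound on $\parallel\Psi\parallel_N$, the vertex argument for $\beta_\infty$, positivity of the convergent singular product) more carefully than the paper, which simply takes $K=[-N,N]^d$ and asserts the same conclusions.
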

\begin{proof}
Note in Theorem \ref{thmGT} that we can take $K=[-N,N]^d$ here. For $\psi_i: \mathbb{Z^+}^d\rightarrow \mathbb{Z^+}$, we have
\[
\beta_\infty:=vol_d\left(K\cap\Psi^{-1}((\mathbb{R}^+)^t\r)\ge vol_d\left([-N,N]^d\cap{\mathbb{Z}^+}^d\r)\ge N^d.
\]
Also, the singular product $\prod_{p}\beta_{p}$ would not vanish here since $\beta_p\neq0$ for all $p$. Thus, the asymptotic formula in Theorem \ref{thmGT} owns a dominated main term, and the lemma follows immediately.
\end{proof}

For each $j=1,2,\dots,720$, we apply the coefficients $a_j$, which are defined via
\[
a_j=\frac{720!}{j}.
\]
We consider a system of affine-linear forms $\Psi=\{\psi_1, \dots, \psi_{1440}\}$ via
\begin{align}
&\psi_{2j-1}(n_1,\dots,n_{720},m)=n_j,\notag\\
&\psi_{2j}(n_1,\dots,n_{720},m)=n_j+a_jm\notag
\end{align}
for $j=1,\dots,720$. This is a special system of affine-linear forms with complexity $s=1$, and it is obvious that $\psi_i: \mathbb{Z^+}^d\rightarrow \mathbb{Z^+}$ since all coefficients are positive integers.

To apply Lemma \ref{lem}, we also need $\beta_p\neq0$.
By \eqref{eqbetap} this would be available if, for each $p$, one can find a lattice point $n\in \mathbb{Z}_p^d$ that $(\psi_i(n),p)=1$ holds for all $i$. Obviously, the lattice point $n=(1,1,\cdots,1,0)\in \mathbb{Z}_p^{721}$ is eligible for our system.
Thus, by Lemma \ref{lem}, there are infinitely many lattice points $(n_1,\dots, n_{720}, m)\in {\mathbb{Z}^+}^{721}$, which make all $\psi_i$ prime. That is to say, for each $m$ in these lattice points, the set $\{a_1m, a_2m, \dots, a_{720}m\}$ is consisting of Maillet numbers. We apply $m'$ for the least one of these $m$, which is a computable number since the system of affine-linear forms is specific.

To prove the theorem, we take the constant
\[
k=720!m',
\]
and let the set be
\[
D=\{a_1 m', a_2 m', \dots, a_{720} m'\}.
\]
Also, for any integer $b>0$, Theorem \ref{thmHW} indicates that there is at least a Kronecker number in the set $\{b, 2b, \dots, 720b\}$.
If $jb$ with $1\le j\le 720$ is a Kronecker number, we can write
\[
kb=a_j m'\cdot j b\in D \cdot \mathcal{K},
\]
which establishes Theorem \ref{themain}.

\section{Representation of rationals}

In this section we will use arguments from Ramsey theory to prove Theorem \ref{ratio}. First we will prove three lemmas and then a more general Theorem \ref{abst}. As a corollary we will prove our Theorem \ref{ratio}.

The notion of $IP$ sets and $IP_r$ sets are well studied in Ramsey theory. Let $\mathcal{P}_{f}\left(\mathbb{N}\right)$ be the collection of
nonempty finite subsets of $\mathbb{N}.$
\begin{definition}
 A set $A\subset \mathbb{N}$ is said to be an $IP$ (resp. $IP_r$ for some $r\in \mathbb{N}$) set if there exists a sequence $\langle x_n \rangle_{n\in \mathbb{N}}$ (resp. $\langle x_n \rangle_{n=1}^r$ ) such that $A=FS\left(\langle x_n \rangle_{n\in \mathbb{N}}\right)$ (resp. $A=FS\left(\langle x_n \rangle_{n=1}^r\right)$).
\end{definition}
A set is said to be an $IP^{\star}$ (resp. $IP_r^{\star}$) if this set intersects with every $IP$ set (resp. $IP_r$ sets). Note that every $IP_{r}$ set contains a $\varDelta_{r}$
set. To check this, let $FS\left(\langle x_{n}\rangle_{n=1}^{r}\right)$
be an $IP_{r}$ set and let
\[
S=\left\{ x_{1},x_{1}+x_{2},\ldots,x_{1}+x_{2}+\cdots+x_{n}\right\} ,
\]
Now $FS\left(\langle x_{n}\rangle_{n=1}^{r}\right)$ contains elements
of the form $\left\{ s-t:s>t\text{ and }s,t\in S\right\} $. Hence
every $\varDelta_{r}^{\star}$ set is $IP_{r}^{\star}$. Again every $IP$ set contains an $IP_r$ set for some $r\in \mathbb{N},$ hence every $IP_r^{\star}$ set is $IP^{\star}$. Hence $\mathcal{K}$ is $IP_{721}^{\star}$ and hence an $IP^{\star}$ set. For details on these sets the reader can see the book \cite{HS}.
Let us recall the following theorem of N. Hindman \cite{H}. Before that recall  a sub IP set of  $FS\left(\langle x_{n}\rangle_{n=1}^{\infty}\right)$ is of the form $FS\left(\langle y_{n}\rangle_{n=1}^{\infty}\right)\subseteq FS\left(\langle x_{n}\rangle_{n=1}^{\infty}\right)$, where for each $i\in \mathbb{N},$ $y_i=\sum_{t\in H_i}x_t$ and $H_i\cap H_j = \emptyset$ for $i\neq j.$
\begin{lemma}\label{h} \cite[Hindman theorem]{H}
For every finite partition of an $IP$ set, there exists a partition which contains a sub-IP set.

\end{lemma}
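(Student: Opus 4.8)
The plan is to record the Galvin--Glazer ultrafilter proof of Hindman's theorem. Write $FS=FS(\langle x_n\rangle_{n=1}^\infty)$ for the given $IP$ set and, for $m\in\mathbb{N}$, set $FS_m=FS(\langle x_n\rangle_{n=m}^\infty)$, so that $FS=FS_1\supseteq FS_2\supseteq\cdots$. First I would recall the standard structure on $\beta\mathbb{N}$, the space of ultrafilters on $\mathbb{N}$: addition extends to $\beta\mathbb{N}$ by declaring $A\in p+q$ iff $\{a\in\mathbb{N}:-a+A\in q\}\in p$, where $-a+A=\{b:a+b\in A\}$; this makes $(\beta\mathbb{N},+)$ a compact right-topological semigroup, and by the Ellis--Namakura lemma every nonempty closed subsemigroup of it contains an idempotent.

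The second step is to produce an idempotent ultrafilter that contains all the tails $FS_m$. Let $E=\bigcap_{m\ge1}\overline{FS_m}$, the intersection in $\beta\mathbb{N}$ of the closures of the tails. Being a decreasing intersection of nonempty closed subsets of the compact space $\beta\mathbb{N}$, $E$ is nonempty and closed. The crucial observation is that $E$ is a subsemigroup: if $a\in FS_m$ and $i_k$ is the largest index occurring in some representation of $a$, then adding to $a$ any element of $FS_{i_k+1}$ keeps the sum in $FS_m$, so $FS_{i_k+1}\subseteq -a+FS_m$; hence for $q\in E$ one gets $-a+FS_m\in q$ for every $a\in FS_m$, so $FS_m\subseteq\{a:-a+FS_m\in q\}$, and if also $p\in E$ then $FS_m\in p+q$; as this holds for all $m$, $p+q\in E$. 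By Ellis--Namakura choose an idempotent $p=p+p\in E$; then $FS_m\in p$ for every $m$. Given a finite partition $FS=C_1\cup\cdots\cup C_k$, exactly one cell $A:=C_i$ lies in the ultrafilter $p$.

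Third, I would distill a sub-$IP$ set out of $A$ using idempotency. Here one uses the elementary fact that if $p=p+p$ and $B\in p$ then $B^\star:=\{b\in B:-b+B\in p\}$ also lies in $p$, and $-b+B^\star\in p$ for every $b\in B^\star$. Construct $y_1,y_2,\dots$ and sets $A=A_0\supseteq A_1\supseteq\cdots$ in $p$ recursively: given $A_n\in p$, choose $y_{n+1}\in A_n^\star\cap FS_{m_n}$ --- possible since both sets lie in $p$, where $m_0=1$ and $m_n-1$ is the largest $x$-index used in $y_1,\dots,y_n$ --- and set $A_{n+1}=A_n\cap(-y_{n+1}+A_n)\in p$. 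Because $y_{n+1}\in FS_{m_n}$, it is a block sum $\sum_{t\in H_{n+1}}x_t$ with $\min H_{n+1}\ge m_n>\max H_n$, so the blocks $H_1,H_2,\dots$ are pairwise disjoint. A straightforward induction on $|F|$ --- carrying the invariant $A_n\subseteq A$ and $\sum_{i\in F}y_i+A_n\subseteq A$ for all nonempty $F\subseteq\{1,\dots,n\}$ --- yields $FS(\langle y_n\rangle)\subseteq A\subseteq FS$. Thus $C_i$ contains $FS(\langle y_n\rangle)$ with the defining index sets $H_i$ pairwise disjoint, i.e.\ a sub-$IP$ set of $FS(\langle x_n\rangle)$, which is the assertion.

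The main obstacle is foundational rather than computational: setting up the extended addition on $\beta\mathbb{N}$ and checking that each right translation is continuous, and then proving the Ellis--Namakura lemma on the existence of idempotents in compact right-topological semigroups (a Zorn's-lemma argument selecting a minimal closed subsemigroup and exploiting cancellation within it). Granting those two ingredients, everything above is bookkeeping. One could instead invoke Baumgartner's elementary combinatorial proof of the same statement; since the paper uses Hindman's theorem only as a cited input, the ultrafilter derivation is the shortest to set down.
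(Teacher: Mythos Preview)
Your proposal is a correct and essentially complete sketch of the Galvin--Glazer ultrafilter proof of Hindman's theorem: the verification that $E=\bigcap_m\overline{FS_m}$ is a closed subsemigroup is right, the idempotent trick with $B^\star$ is stated correctly, and the recursive construction with the invariant you name does indeed yield $FS(\langle y_n\rangle)\subseteq A$ with pairwise disjoint index blocks $H_n$, hence a genuine sub-$IP$ set of $FS(\langle x_n\rangle)$ inside the chosen cell. (One cosmetic point: the lemma is due to Ellis and \emph{Numakura}.)

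The comparison with the paper is simple: the paper gives no proof of this lemma at all. It is recorded as a citation of Hindman's original result and then used as a black box in the proofs of Lemmas~\ref{ipc} and~\ref{intip}. So your write-up supplies strictly more than the paper does here. What the paper's approach buys is brevity and an honest acknowledgement that this is a deep external input; what your approach buys is self-containment, at the cost of importing the $\beta\mathbb{N}$ machinery and the Ellis--Numakura lemma, which you rightly flag as the only nontrivial prerequisites.
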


First we will prove three lemmas before we prove Theorem \ref{ratio}. The following lemma says that dilation of any $IP^{\star}$ set by a number is again an $IP^{\star}$ set.
\begin{lemma}\label{ipd}
Let $A\subseteq \mathbb{N},$ be an $IP^{\star}$ set. Then for any $m\in \mathbb{N}$, $m\cdot A=\left\lbrace mx:x\in A\right\rbrace $ is again an $IP^{\star}$ set.
\end{lemma}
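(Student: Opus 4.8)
The plan is to argue directly from the definition of an $IP^\star$ set. Fix an arbitrary $IP$ set $B=FS\left(\langle y_n\rangle_{n=1}^\infty\right)$; it suffices to produce an element of $B$ that is divisible by $m$ and whose quotient by $m$ lies in $A$, for then that element lies in $m\cdot A\cap B$, and since $B$ is an arbitrary $IP$ set this shows $m\cdot A$ meets every $IP$ set, i.e. $m\cdot A$ is $IP^\star$.

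The heart of the argument is to locate, inside $B$, a whole sub-$IP$ set consisting of multiples of $m$. I would do this by a pigeonhole argument on the partial sums $s_k=y_1+\cdots+y_k$: among the residues $s_1,s_2,\dots\pmod m$ some class is attained infinitely often, say along indices $k_1<k_2<\cdots$, and then $z_j:=s_{k_{j+1}}-s_{k_j}=\sum_{t=k_j+1}^{k_{j+1}}y_t$ is a positive multiple of $m$. Since the index blocks $\{k_j+1,\dots,k_{j+1}\}$ are pairwise disjoint, $FS\left(\langle z_j\rangle_{j=1}^\infty\right)$ is a sub-$IP$ set of $B$, and every one of its elements is divisible by $m$. (Alternatively one can get the same conclusion from Hindman's theorem, Lemma \ref{h}, by partitioning $B$ according to residues modulo $m$ and passing to a sub-$IP$ set contained in one cell; here one should then discard the first generator, so that the remaining generators — and hence all their finite sums — become divisible by $m$.)

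With such a sub-$IP$ set in hand, set $w_j:=z_j/m\in\mathbb{N}$. Then $FS\left(\langle w_j\rangle_{j=1}^\infty\right)$ is an $IP$ set, so by the hypothesis that $A$ is $IP^\star$ there is some $a\in A\cap FS\left(\langle w_j\rangle_{j=1}^\infty\right)$. Writing $a=\sum_{j\in H}w_j$ for a finite nonempty $H$ and multiplying by $m$ gives $ma=\sum_{j\in H}z_j\in FS\left(\langle z_j\rangle_{j=1}^\infty\right)\subseteq B$, while $ma\in m\cdot A$ by the choice of $a$. Hence $m\cdot A\cap B\neq\emptyset$, as required.

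There is no serious obstacle here: the only step that needs care is the extraction of the $m$-divisible sub-$IP$ set, which is exactly where the Ramsey-theoretic input (pigeonhole, or Hindman's theorem) enters; after that the proof is a one-line divide-and-multiply. The argument also makes transparent why the statement is special to dilations rather than to general affine maps — it uses only that $m\sum_{j\in H}w_j=\sum_{j\in H}mw_j$, i.e. that multiplication by $m$ commutes with the finite-sum operation defining $IP$ sets.
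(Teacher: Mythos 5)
Your proof is correct and follows essentially the same route as the paper: both arguments extract from the arbitrary $IP$ set a sub-$IP$ set all of whose elements are divisible by $m$ (the paper groups generators sharing a residue modulo $m$ into blocks whose sums are divisible by $m$, while you apply the pigeonhole to the partial sums), then divide by $m$ to get a new $IP$ set, intersect it with $A$, and multiply back. One minor remark: in your parenthetical Hindman-theorem alternative no generator needs to be discarded, since a sub-$IP$ set contained in a single residue class modulo $m$ is automatically in the class of $0$ (because $u_1$, $u_2$ and $u_1+u_2$ would all share that residue).
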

\begin{proof}
Let $\langle x_{n}\rangle_{n\in\mathbb{N}}$ be any sequence. For
each
$i\in\mathbb{N}$, let
$$x_{i}\equiv j \mod m$$
where $j\in\left\{ 0,1,\ldots,m-1\right\} .$ Now pick $H_{1}$ (consider a finite
$H_{1}$ to be the collection of those $i$'s such that all $j$'s are same) such that $m\vert\sum_{t\in H_{1}}x_{t}.$ Now
continue this process to obtain a disjoint sequences $\langle H_{n}\rangle_{n\in\mathbb{N}}$
of the finite subsets of $\mathbb{N}$ such that $m\vert\sum_{t\in H_{n}}x_{t}$
for each $n\in\mathbb{N}$. Now choose a new sequence $\langle y_{n}\rangle_{n\in\mathbb{N}}$
such that $y_{n}=\frac{1}{m}\sum_{t\in H_{n}}x_{t}$ for each $n\in\mathbb{N}$.
Then $A\cap FS\left(\langle y_{n}\rangle_{n\in\mathbb{N}}\right)\neq\emptyset$
and this implies $m\cdot A\cap FS\left(\langle x_{n}\rangle_{n\in\mathbb{N}}\right)\neq\emptyset$, finishing the proof.
\end{proof}
The following lemma says that any $IP^{\star}$ set contains an $IP$ set. In fact it contains a sub $IP$ set of any given $IP$ set.
\begin{lemma}\label{ipc}
Let $FS(\langle x_n \rangle_n)$ be any $IP$ set and let $A$ be any $IP^{\star}$ set. Then there exists a sub $IP$ set  $FS(\langle y_n \rangle_n)$ of  $FS(\langle x_n \rangle_n)$ such that $A$ contains $FS(\langle y_n \rangle_n)$.
\end{lemma}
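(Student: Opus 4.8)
The plan is to derive Lemma~\ref{ipc} as a one-step consequence of Hindman's theorem (Lemma~\ref{h}) together with the definition of an $IP^{\star}$ set. The idea is that $A$ being $IP^{\star}$ means $A$ cannot avoid any $IP$ set, so after partitioning the given $IP$ set into "inside $A$" and "outside $A$", whichever cell Hindman's theorem hands us a sub-$IP$ set in must be the cell inside $A$.

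Concretely, I would first partition $FS(\langle x_n\rangle_n)$ into the two cells $A\cap FS(\langle x_n\rangle_n)$ and $FS(\langle x_n\rangle_n)\setminus A$. By Lemma~\ref{h}, one of these two cells contains a sub-$IP$ set $FS(\langle y_n\rangle_n)\subseteq FS(\langle x_n\rangle_n)$; that is, there are pairwise disjoint finite sets $H_i\subseteq\mathbb{N}$ with $y_i=\sum_{t\in H_i}x_t$, and $FS(\langle y_n\rangle_n)$ is an $IP$ set. The next step is to exclude the "bad" cell: if $FS(\langle y_n\rangle_n)$ were contained in $FS(\langle x_n\rangle_n)\setminus A$, then $FS(\langle y_n\rangle_n)$ would be an $IP$ set disjoint from $A$, contradicting the hypothesis that $A$ is $IP^{\star}$. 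Hence $FS(\langle y_n\rangle_n)$ lies in the other cell, so $FS(\langle y_n\rangle_n)\subseteq A\cap FS(\langle x_n\rangle_n)\subseteq A$, which is exactly the assertion.

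The only real content here is invoking Hindman's theorem correctly; the rest is bookkeeping with finite sums (checking that a cell witnessed by Lemma~\ref{h} is genuinely a sub-$IP$ set of the original, in the sense defined just before Lemma~\ref{h}). I do not expect a genuine obstacle: the subtlety worth flagging is merely that Hindman's theorem does not specify which cell receives the monochromatic sub-$IP$ set, and it is precisely the $IP^{\star}$ property of $A$ that pins it down to the cell inside $A$. This lemma, together with Lemma~\ref{ipd}, is then set up to feed into the proof of the general Theorem~\ref{abst} and thence Theorem~\ref{ratio}.
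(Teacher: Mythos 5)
Your proposal is correct and is essentially identical to the paper's proof: both partition $FS(\langle x_n\rangle_n)$ into $A\cap FS(\langle x_n\rangle_n)$ and $FS(\langle x_n\rangle_n)\setminus A$, apply Hindman's theorem to obtain a sub-$IP$ set in one cell, and use the $IP^{\star}$ property of $A$ to exclude the complementary cell.
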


\begin{proof}
 Let us choose the following partition of $FS(\langle x_n \rangle_n)$ by,
 $$FS(\langle x_n \rangle_n)=(A\cap FS(\langle x_n \rangle_n)) \cup ( FS(\langle x_n \rangle_n)\setminus A).$$
Now from Lemma \ref{h}, we have a sub-IP set $FS(\langle y_n \rangle_n))\subseteq FS(\langle x_n \rangle_n))$ such that either $FS(\langle y_n \rangle_n))\subseteq A\cap FS(\langle x_n \rangle_n)$ or $FS(\langle y_n \rangle_n))\subseteq FS(\langle x_n \rangle_n)\setminus A.$ But $A$ is an $IP^{\star}$ set, hence the second case is not possible. So, $FS(\langle y_n \rangle_n))\subseteq A$, and the lemma follows.
\end{proof}
We need the following lemma in our proof. This lemma says that if we take the intersection of finitely many $IP^{\star}$ sets, then still it will be an $IP^{\star}$ set.

\begin{lemma}\label{intip}
Intersection of finitely many $IP^{\star}$ sets is again an $IP^{\star}$ set.
\end{lemma}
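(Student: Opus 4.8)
The plan is to prove this by induction on the number $k$ of sets, using Lemma~\ref{ipc} as the engine: every $IP^{\star}$ set contains a sub-$IP$ set of any prescribed $IP$ set. For $k=1$ there is nothing to prove. So suppose $A_{1},\dots,A_{k}$ are $IP^{\star}$ sets and let $FS(\langle x_{n}\rangle_{n})$ be an arbitrary $IP$ set; the goal is $(A_{1}\cap\cdots\cap A_{k})\cap FS(\langle x_{n}\rangle_{n})\neq\emptyset$. First apply Lemma~\ref{ipc} to $A_{1}$ to get a sub-$IP$ set $FS(\langle y_{n}^{(1)}\rangle_{n})\subseteq FS(\langle x_{n}\rangle_{n})$ with $FS(\langle y_{n}^{(1)}\rangle_{n})\subseteq A_{1}$. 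Since $FS(\langle y_{n}^{(1)}\rangle_{n})$ is itself an $IP$ set, apply Lemma~\ref{ipc} to $A_{2}$ and this $IP$ set to obtain a further sub-$IP$ set $FS(\langle y_{n}^{(2)}\rangle_{n})\subseteq FS(\langle y_{n}^{(1)}\rangle_{n})$ with $FS(\langle y_{n}^{(2)}\rangle_{n})\subseteq A_{2}$; by the nesting it also lies in $A_{1}$. Iterating $k$ times yields $FS(\langle y_{n}^{(k)}\rangle_{n})\subseteq FS(\langle x_{n}\rangle_{n})$ with $FS(\langle y_{n}^{(k)}\rangle_{n})\subseteq A_{i}$ for every $i\le k$, hence $FS(\langle y_{n}^{(k)}\rangle_{n})\subseteq A_{1}\cap\cdots\cap A_{k}$. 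This set is nonempty and contained in $FS(\langle x_{n}\rangle_{n})$, so $A_{1}\cap\cdots\cap A_{k}$ meets the arbitrary $IP$ set $FS(\langle x_{n}\rangle_{n})$ and is therefore $IP^{\star}$.

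An alternative is to argue in one shot directly from Hindman's theorem (Lemma~\ref{h}): given an $IP$ set $FS(\langle x_{n}\rangle_{n})$, partition it into the at most $2^{k}$ cells determined by membership in $A_{1},\dots,A_{k}$. Lemma~\ref{h} produces a sub-$IP$ set $FS(\langle y_{n}\rangle_{n})$ inside a single cell, and since each $A_{i}$ is $IP^{\star}$ it must intersect $FS(\langle y_{n}\rangle_{n})$; as that cell is either contained in $A_{i}$ or disjoint from it, it must be contained in $A_{i}$, and this for every $i$, so $FS(\langle y_{n}\rangle_{n})\subseteq A_{1}\cap\cdots\cap A_{k}$, which settles the claim. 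In fact this second argument shows slightly more, namely that the intersection contains a sub-$IP$ set of any given $IP$ set, echoing Lemma~\ref{ipc}.

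I do not expect a genuine obstacle. Once Lemma~\ref{ipc} (equivalently, Hindman's theorem) is in hand, the statement is a routine iteration; the only point needing a line of care is that the sub-$IP$ sets remain nested along the induction, which is immediate from transitivity of inclusion, together with the observation that each $FS(\langle y_{n}^{(j)}\rangle_{n})$ is in particular an $IP$ set so that Lemma~\ref{ipc} may be reapplied to it.
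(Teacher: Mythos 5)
Your first argument (iterating Lemma~\ref{ipc} to produce a nested chain of sub-$IP$ sets landing in $A_{1}\cap\cdots\cap A_{k}$) is exactly the paper's proof, and it is correct. The alternative one-shot argument via a $2^{k}$-cell partition and Hindman's theorem is also valid but is just the same idea compressed, so no further comment is needed.
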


\begin{proof}
 Let $n\in \mathbb{N}$. Let $A_1,A_2,\ldots ,A_n$ be $n$ many $IP^{\star}$ sets . Now assume that  $FS(\langle x_n \rangle_n)$ is an $IP$ set. From Lemma \ref{ipc}, we have a sub $IP$ set  $FS(\langle y_n \rangle_n))\subseteq A_1\cap FS(\langle x_n \rangle_n)$ and so $FS(\langle y_n \rangle_n))\subseteq A_1.$ Now again applying Lemma \ref{ipc}, we have $FS(\langle z_n \rangle_n))\subseteq A_2\cap FS(\langle y_n \rangle_n)$ and so $FS(\langle z_n \rangle_n))\subseteq A_2.$ Hence $FS(\langle z_n \rangle_n))\subseteq A_1\cap A_2.$ Iterating this argument we have a sub $IP$ set $FS(\langle a_n \rangle_n)$  of $FS(\langle x_n \rangle_n)$ such that $FS(\langle a_n \rangle_n)\subseteq A_1\cap A_2\cap \ldots \cap A_n.$ As $FS(\langle x_n \rangle_n)$ is any $IP$ set, arbitrarily chosen, we have $A_1\cap A_2\cap \ldots \cap A_n$ is an $IP^{\star}$ set. This accomplishes the proof.
\end{proof}

The following theorem is an abstract formulation of our Theorem \ref{ratio}. In fact this result is much stronger.
\begin{theorem}\label{abst}
If $A$ is any $IP^{\star}$ set and $B$ is an $IP$ set, then $$\mathbb{Q}_{>0}=\frac{A}{B}=\left\lbrace a/b:a\in A, b\in B \right\rbrace.$$
\end{theorem}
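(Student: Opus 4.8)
The plan is to establish the nontrivial inclusion $\mathbb{Q}_{>0}\subseteq A/B$; the reverse inclusion is immediate, since $A,B\subseteq\mathbb{N}$ forces every quotient $a/b$ to be a positive rational. Fix a positive rational, written as $p/q$ with $p,q\in\mathbb{N}$. It suffices to produce one positive integer $m$ with $qm\in B$ and $pm\in A$, because then $p/q=(pm)/(qm)\in A/B$.

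Write $B=FS(\langle x_n\rangle_{n=1}^{\infty})$. The first step is to extract from $B$ a sub-IP set all of whose generators are divisible by $q$. This is carried out exactly as in the proof of Lemma~\ref{ipd}: by looking at partial sums of the $x_n$ modulo $q$ (equivalently, by grouping indices according to their residue class) one obtains pairwise disjoint finite sets $H_1,H_2,\dots\subseteq\mathbb{N}$ with $q\mid\sum_{t\in H_i}x_t$ for every $i$. Setting $y_i=\frac1q\sum_{t\in H_i}x_t\in\mathbb{N}$, the set $FS(\langle\,q y_n\rangle_n)=FS(\langle\sum_{t\in H_n}x_t\rangle_n)$ is a sub-IP set of $B$, and $FS(\langle y_n\rangle_n)$ is a genuine IP set of positive integers.

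Next, dilate by $p$: the set $C:=FS(\langle\,p y_n\rangle_{n=1}^{\infty})=p\cdot FS(\langle y_n\rangle_n)$ is again an IP set of positive integers. Since $A$ is an $IP^{\star}$ set it meets $C$, so there is a finite nonempty $F\subseteq\mathbb{N}$ with $a:=p\sum_{i\in F}y_i\in A$. Put $m:=\sum_{i\in F}y_i$, so that $pm=a\in A$, while $qm=\sum_{i\in F}q y_i=\sum_{i\in F}\sum_{t\in H_i}x_t=\sum_{t\in\bigcup_{i\in F}H_i}x_t\in FS(\langle x_n\rangle_n)=B$, using that the $H_i$ are pairwise disjoint. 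Hence $p/q=a/(qm)\in A/B$, and the theorem follows. (If one wants the stronger statement that $p/q$ admits infinitely many such representations, replace the appeal to $A\cap C\neq\emptyset$ by Lemma~\ref{ipc}, which provides an entire sub-IP set of $C$ inside $A$.)

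The only delicate point is the first step — locating a sub-IP set of $B$ inside $q\mathbb{N}$ in such a way that dividing through by $q$ still leaves an IP set of integers — but this is precisely the block-grouping device already established for Lemma~\ref{ipd}, so I do not anticipate a real obstacle. Note that Hindman's theorem (Lemma~\ref{h}) is not needed for the statement as written: it enters only through Lemma~\ref{ipc} in the optional strengthening, while the core argument uses nothing beyond the definition of an $IP^{\star}$ set together with the closure of IP sets under the block-grouping and dilation operations used above.
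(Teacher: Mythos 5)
Your proof is correct and is essentially the paper's own argument in dual form: the paper dilates $A$ by the denominator (Lemma \ref{ipd}) and $B$ by the numerator and intersects the resulting $IP^{\star}$ and $IP$ sets, whereas you inline the same block-grouping device to contract $B$ by the denominator and then dilate by the numerator before intersecting with $A$. The underlying mechanism (pigeonholing generators modulo the denominator plus the $IP^{\star}$ intersection property) is identical, so there is nothing to correct.
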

\begin{proof}
Let $\frac{m}{n}\in \mathbb{Q}_{>0}.$ Now $m\cdot B$ is an $IP$ set, and from Lemma \ref{ipd}, we have $n\cdot A$ is an $IP^{\star}$ set. Hence, $n\cdot A \cap m\cdot B \neq \emptyset.$ Let $x=na=mb$, where $a\in A, b\in B.$ Then $m/n=a/b\in A/B.$
This completes the proof.
\end{proof}

Now we are ready to prove our Theorem \ref{ratio}.

 \begin{proof} [Proof of Theorem \ref{ratio}]\label{proof} As $\mathcal{K}$ is an $IP_{721}^{\star}$ set, it is an $IP^{\star}$ set. Again using Lemma \ref{ipc}, we can say that $\mathcal{K}$ contains an $IP$ set. Hence from Theorem \ref{abst}, we have our desired result.
 \end{proof}
The above proof of Theorem \ref{ratio} shows a more powerful result, stated below.
\begin{Remark}
For any $IP$ set $\mathcal{D}\subset \mathcal{K}$, we have $\mathbb{Q}_{>0}=\frac{\mathcal{K}}{\mathcal{D}}$, and hence also $\mathbb{Q}_{>0}=\frac{\mathcal{D}}{\mathcal{K}}$.
\end{Remark}


\end{document}